\documentclass{amsart}
\usepackage{amsmath, amsfonts, amssymb, stmaryrd}

\newtheorem{thm1}{Theorem}
\newtheorem{thm}{Theorem}[section] %\numberwithin{thm}{chapter}
\newtheorem{lemma}[thm]{Lemma}
\newtheorem{prop}[thm]{Proposition}
\newtheorem{cor}[thm]{Corollary}

\theoremstyle{definition}
\newtheorem{remark}[thm]{Remark}

\DeclareMathOperator{\OG}{OG}

\DeclareMathOperator{\QK}{QK}

\DeclareMathOperator{\Lie}{Lie}
\DeclareMathOperator{\Pic}{Pic}

\newcommand{\bP}{{\mathbb P}}

\newcommand{\C}{{\mathbb C}}

\newcommand{\Z}{{\mathbb Z}}

\newcommand{\cO}{{\mathcal O}}

\newcommand{\cZ}{{\mathcal Z}}

\newcommand{\al}{{\alpha}}

\newcommand{\ev}{\operatorname{ev}}
\newcommand{\wt}{\widetilde}

\newcommand{\wb}{\overline}

\newcommand{\bd}{{\bf d}}
\newcommand{\be}{{\bf e}}
\newcommand{\ignore}[1]{}

\newcommand{\Mb}{\wb{\mathcal M}}

\def\N{{\mathbb N}}

\newcommand{\drc}{{d_{\rm rc}}}
\newcommand{\dcl}{{d_{\rm cl}}}

%%% Local Variables: 
%%% mode: latex
%%% TeX-master: t
%%% End: 

\begin{document}

\title{Rational connectedness implies\\ Finiteness of quantum
  $K$-theory}

\date{May 10, 2013}

\author{Anders~S.~Buch}
\address{Department of Mathematics, Rutgers University, 110
  Frelinghuysen Road, Piscataway, NJ 08854, USA}
\email{asbuch@math.rutgers.edu}

\author{Pierre--Emmanuel Chaput}
\address{Domaine Scientifique Victor Grignard, 239, Boulevard des
  Aiguillettes, Universit{\'e} Henri Poincar{\'e} Nancy 1, B.P. 70239,
  F-54506 Vandoeuvre-l{\`e}s-Nancy Cedex, France}
\email{pierre-emmanuel.chaput@univ-lorraine.fr}

\author{Leonardo~C.~Mihalcea}
\address{Department of Mathematics, Virginia Tech University, 460 McBryde,
  Blacksburg VA 24060, USA}
\email{lmihalce@math.vt.edu}

\author{Nicolas Perrin}
\address{Mathematisches Institut, Heinrich-Heine-Universit{\"a}t,
D-40204 D{\"u}sseldorf, Germany}
\email{perrin@math.uni-duesseldorf.de}

\subjclass[2000]{Primary 14N35; Secondary 19E08, 14N15, 14M15, 14M20, 14M22}

\thanks{The first author was supported in part by NSF grant
  DMS-1205351.}

\thanks{The third author was supported in part by NSA Young
  Investigator Award H98230-13-1-0208.}

\begin{abstract}
  Let $X$ be any generalized flag variety with Picard group of rank
  one.  Given a degree $d$, consider the Gromov-Witten variety of
  rational curves of degree $d$ in $X$ that meet three general points.
  We prove that, if this Gromov-Witten variety is rationally connected
  for all large degrees $d$, then the structure constants of the small
  quantum $K$-theory ring of $X$ vanish for large degrees.
\end{abstract}

\maketitle

\markboth{A.~BUCH, P.--E.~CHAPUT, L.~MIHALCEA, AND N.~PERRIN}
{RATIONAL CONNECTEDNESS IMPLIES FINITENESS}

\section{Introduction}\label{sec:intro}

The (small) quantum $K$-theory ring $\QK(X)$ of a smooth complex
projective variety $X$ is a generalization of both the Grothendieck
ring $K(X)$ of algebraic vector bundles on $X$ and the small quantum
cohomology ring of $X$.  The ring $\QK(X)$ was defined by Givental
\cite{givental:wdvv} when $X$ is a rational homogeneous space and by
Lee \cite{lee:quantum*1} in general.  In this paper we study this ring
when $X$ is a complex projective rational homogeneous space with
$\Pic(X)=\Z$.  Equivalently, we have $X = G/P$ where $G$ is a complex
semisimple algebraic group and $P\subset G$ is a maximal parabolic
subgroup.  The product in $\QK(X)$ of two arbitrary classes $\al,
\beta \in K(X)$ is a power series
\[
\alpha \star \beta = \sum_{d \geq 0} (\alpha \star \beta)_d \, q^d \,,
\]
where each coefficient $(\alpha \star \beta)_d \in K(X)$ is defined
using the $K$-theory ring of the Kontsevich moduli space
$\Mb_{0,3}(X,d)$ of stable maps to $X$ of degree $d$.  For general
homogeneous spaces it is an open problem if this power series can have
infinitely many non-zero terms.  The product $\al\star\beta$ is known
to be finite if $X$ is a Grassmann variety of type A
\cite{buch.mihalcea:quantum}.  More generally, when $X$ is any
cominuscule homogeneous space, it was proved by the authors in
\cite{buch.chaput.ea:finiteness} that all products in $\QK(X)$ are
finite.  Let $d_X(2)$ denote the smallest possible degree of a
rational curve connecting two general points in $X$.  The main theorem
of \cite{buch.chaput.ea:finiteness} states that $(\al \star \beta)_d =
0$ whenever $X$ is cominuscule and $d > d_X(2)$, which is the best
possible bound.

Given three general points $x,y,z \in X$, let $M_d(x,y,z) \subset
\Mb_{0,3}(X,d)$ denote the {\em Gromov-Witten variety\/} of stable
maps that send the three marked points to $x$, $y$, and $z$.  We will
assume that this variety is rationally connected for all sufficiently
large degrees $d$.  Let $\drc$ be a positive integer such that
$M_d(x,y,z)$ is rationally connected for $d \geq \drc$.  We also let
$\dcl$ be the smallest length of a chain of lines connecting two
general points in $X$.  Our main result is the following theorem.

\begin{thm1}\label{thm:main}
  We have $(\al\star\beta)_d = 0$ for all $d \geq \drc + \dcl$.
\end{thm1}

The Gromov-Witten varieties $M_d(x,y,z)$ of large degrees are known to
be rational when $X$ is a cominuscule homogeneous space, an orthogonal
Grassmannian $\OG(m,N)$ for $m \neq \frac{N}{2}-1$, or any adjoint
variety of type different from A or G$_2$.  This was proved in
\cite{buch.mihalcea:quantum} for Grassmannians of type A and in
\cite{chaput.perrin:rationality} in all other cases.
Theorem~\ref{thm:main} therefore establishes the finiteness of quantum
$K$-theory for many new spaces.  The {\em orthogonal Grassmannian\/}
$\OG(m,N)$ is the variety of isotropic $m$-dimensional subspaces in
the vector space $\C^N$ equipped with a non-degenerate symmetric
bilinear form; these varieties account for all spaces $G/P$ where $G$
is a group of type $B_n$ or $D_n$ and $P$ is a maximal parabolic
subgroup.  The variety $X = G/P$ is called {\em adjoint\/} if it is
isomorphic to the closed orbit of the adjoint action of $G$ on
$\bP(\Lie(G))$.

\ignore{
\begin{remark}
  Jason Starr reports that the results of \cite{jong.he.ea:families}
  can be used to prove that the Gromov-Witten varieties $M_d(x,y,z)$
  of large degrees are rationally connected when $X$ is any projective
  rational homogeneous space with $\Pic(X)=\Z$.
\end{remark}
}

\begin{remark}
  We thank Jason Starr for sending us an outline of an argument that
  uses the results of \cite{jong.he.ea:families, jong.starr:low} to
  prove that the Gromov-Witten varieties $M_d(x,y,z)$ of large degrees
  are rationally connected when $X$ is any projective rational
  homogeneous space with $\Pic(X)=\Z$.  As a consequence,
  Theorem~\ref{thm:main} can be applied to all such spaces.  We also
  thank Starr for making us aware of
  \cite[Lemma~15.8]{jong.he.ea:families}.
\end{remark}

\section{Stable maps and Gromov-Witten varieties}

We recall here some notation and results from
\cite{buch.chaput.ea:finiteness}.  Let $X = G/P$ be a homogeneous
space defined by a semisimple complex linear algebraic group $G$ and a
parabolic subgroup $P \subset G$.  Given an effective degree $d \in
H_2(X;\Z)$ and an integer $n \geq 0$, the Kontsevich moduli space
$\Mb_{0,n}(X,d)$ parametrizes the isomorphism classes of $n$-pointed
stable (genus zero) maps $f : C \to X$ with $f_*[C] = d$, and comes
with a total evaluation map $\ev = (\ev_1,\dots,\ev_n) :
\Mb_{0,n}(X,d) \to X^n := X \times \dots \times X$.  A detailed
construction of this space can be found in the survey
\cite{fulton.pandharipande:notes}.

Let $\bd = (d_0,d_1,\dots,d_r)$ be a sequence of effective classes
$d_i \in H_2(X;\Z)$, let $\be = (e_0,\dots,e_r) \in \N^{r+1}$, and set
$|\bd|=\sum d_i$ and $|\be|=\sum e_i$.  We will assume that $e_0>0$
and $e_r>0$, and that $e_i \geq 1+\delta_{i,0}+\delta_{i,r}$ whenever
$d_i=0$.  Let $M_{\bd,\be} \subset \Mb_{0,|\be|}(X,|\bd|)$ be the
closure of the locus of stable maps $f : C \to X$ defined on a chain
$C$ of $r+1$ projective lines, such that the $i$-th projective line
contains $e_i$ marked points (numbered from $1+\sum_{j<i}e_j$ to
$\sum_{j\leq i}e_j$) and the restriction of $f$ to this component has
degree $d_i$.  This variety can also be defined inductively as
follows.  If $r=0$, then $M_{d_0,e_0} = \Mb_{0,e_0}(X,d_0)$.
Otherwise it follows from \cite[Prop.~3.6]{buch.chaput.ea:finiteness}
that $M_{\bd,\be}$ is the product over $X$ of the maps $\ev_{|\be'|} :
M_{\bd',\be'} \to X$ and $\ev_1 : M_{d_r,e_r+1} \to X$, where $\bd' =
(d_0, \dots, d_{r-1})$ and $\be' = (e_0, \dots, e_{r-2}, e_{r-1}+1)$.
Set $\cZ_{\bd,\be} = \ev(M_{\bd,\be}) \subset X^{|\be|}$.  Given
subvarieties $\Omega_1,\dots,\Omega_m$ of $X$ with $m \leq |\be|$,
define a boundary Gromov-Witten variety by
$M_{\bd,\be}(\Omega_1,\dots,\Omega_m) = \bigcap_{i=1}^m
\ev_i^{-1}(\Omega_i) \subset M_{\bd,\be}$.  We also write
$\Gamma_{\bd,\be}(\Omega_1,\dots,\Omega_m) =
\ev_{|\be|}(M_{\bd,\be}(\Omega_1,\dots,\Omega_m)) \subset X$.  If no
sequence $\be$ is specified, we will use $\be=(3)$ when $r=0$ and
$\be=(2,0,\dots,0,1)$ when $r>0$.  This convention will be used only
when $d_i \neq 0$ for $i>0$.  For this reason the sequence $\bd =
(d_0,\dots,d_r)$ will be called a {\em stable sequence of degrees\/}
if $d_i \neq 0$ for $i>0$.

An irreducible variety $Y$ has {\em rational singularities\/} if there
exists a desingularization $\pi : \wt Y \to Y$ such that $\pi_*
\cO_{\wt Y} = \cO_Y$ and $R^i \pi_* \cO_{\wt Y} = 0$ for all $i>0$.
An arbitrary variety has rational singularities if its irreducible
components have rational singularities, are disjoint, and have the
same dimension.  We need the following result from
\cite[Lemma~3]{brion:positivity}.

\begin{lemma}[Brion]\label{lem:ratsingfib}
  Let $Z$ and $S$ be varieties and let $\pi : Z \to S$ be a morphism.
  If $Z$ has rational singularities, then the same holds for the
  general fibers of $\pi$.
\end{lemma}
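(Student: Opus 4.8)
The plan is to transport the two cohomological conditions defining rational singularities from $Z$ to a general fibre of $\pi$, by combining a resolution of $Z$ with generic smoothness and cohomology-and-base-change, all in characteristic zero.

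First I would reduce to the case where $Z$ is irreducible. Since the property of having rational singularities refers only to the irreducible components of a variety --- which must be pairwise disjoint, of the same dimension, and each with rational singularities --- and since the general fibre of $\pi$ is the disjoint union of the general fibres of the restrictions $\pi|_{Z_j}$ to the components $Z_j$ of $Z$, it is enough to handle one $Z_j$ at a time: a component not dominating $\overline{\pi(Z)}$ contributes nothing to a general fibre, a dominant component has general fibre pure of dimension $\dim Z - \dim \overline{\pi(Z)}$ (the same for all dominant components, as $Z$ is equidimensional), and distinct components stay disjoint after restriction, so once the irreducible case is known the conclusion follows for $Z$. Replacing $S$ by $\overline{\pi(Z)}$, I may thus assume $Z$ and $S$ are irreducible and $\pi$ is dominant, and I fix a proper desingularization $\rho : \wt Z \to Z$ with $\rho_* \cO_{\wt Z} = \cO_Z$ and $R^i \rho_* \cO_{\wt Z} = 0$ for $i > 0$. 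Let $E \subsetneq Z$ be the closed set over which $\rho$ fails to be an isomorphism; then $\dim E < \dim Z$ and $\dim \rho^{-1}(E) < \dim \wt Z$.

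Next I would pass to a sufficiently small dense open $U \subseteq S$, shrinking so that $\pi$ is flat over $U$ (generic flatness) and $\pi \circ \rho$ is smooth over $U$ (generic smoothness, using that $\wt Z$ is smooth and we are in characteristic zero); in particular $\wt Z_s$ is smooth for $s \in U$. Shrinking once more, I may assume $E \cap Z_s$ and $\rho^{-1}(E) \cap \wt Z_s$ are either empty or of the expected, strictly smaller, dimension for all $s \in U$. Fixing $s \in U$, the cartesian square
\[
\begin{array}{ccc}
\wt Z_s & \hookrightarrow & \wt Z \\
\big\downarrow & & \big\downarrow \\
Z_s & \hookrightarrow & Z
\end{array}
\]
with vertical maps $\rho_s$ and $\rho$ has $\rho_s$ proper and surjective, $\wt Z_s$ smooth, and $\rho_s$ an isomorphism over the dense open $Z_s \ssm E$; comparing over this open set shows that the irreducible components of $Z_s$ and of $\wt Z_s$ correspond bijectively, with each component of $\wt Z_s$ mapping birationally onto the matching one of $Z_s$. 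It then remains only to check that $R^0(\rho_s)_* \cO_{\wt Z_s} = \cO_{Z_s}$ and $R^i(\rho_s)_* \cO_{\wt Z_s} = 0$ for $i > 0$.

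This last point is the heart of the argument and, I expect, the main obstacle: cohomology and base change has to be applied to $\rho$ relative to the base $S$ --- where flatness is available after shrinking $U$ --- rather than to the non-flat morphism $\rho$ by itself. Concretely, the square above is Tor-independent: since $\cO_Z$ is flat over $\cO_S$ near the fibre over $s$, one has $\cO_{Z_s} = \cO_Z \otimes^L_{\cO_S} k(s)$ there, and since $\cO_{\wt Z}$ is flat over $\cO_S$ there, $\cO_{\wt Z} \otimes^L_{\cO_Z} \cO_{Z_s} = \cO_{\wt Z} \otimes^L_{\cO_S} k(s) = \cO_{\wt Z_s}$ is concentrated in degree zero --- which is exactly Tor-independence of $\cO_{\wt Z}$ and $\cO_{Z_s}$ over $\cO_Z$. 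Derived base change along the square then gives $R(\rho_s)_* \cO_{\wt Z_s} = L\iota^* R\rho_* \cO_{\wt Z} = L\iota^* \cO_Z = \cO_{Z_s}$, where $\iota : Z_s \hookrightarrow Z$. In particular the natural map $\cO_{Z_s} \to (\rho_s)_* \cO_{\wt Z_s}$ is an isomorphism; since it factors through $\cO_{(Z_s)_{\mathrm{red}}}$ (because $\wt Z_s$ is reduced), the fibre $Z_s$ is reduced. Componentwise, $\rho_s : \wt Z_s \to Z_s$ is therefore a desingularization with $R^0(\rho_s)_* \cO_{\wt Z_s} = \cO_{Z_s}$ and $R^i(\rho_s)_* \cO_{\wt Z_s} = 0$ for $i > 0$, so $Z_s$ has rational singularities for every $s \in U$.
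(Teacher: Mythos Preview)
The paper does not supply its own proof of this lemma; it simply quotes the result from Brion \cite[Lemma~3]{brion:positivity}. Your argument is essentially the standard one (and close to Brion's): resolve $Z$, use generic smoothness in characteristic zero to make the resolved fibres $\wt Z_s$ smooth, and then use Tor-independent base change to transport the identity $R\rho_*\cO_{\wt Z}=\cO_Z$ to the fibre. The Tor-independence step is handled correctly: flatness of both $Z$ and $\wt Z$ over the shrunken base $U$ is exactly what makes the fibre square Tor-independent over $Z$, so that $R(\rho_s)_*\cO_{\wt Z_s}=\cO_{Z_s}$.

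One small point worth making explicit, given the paper's definition of rational singularities for possibly reducible varieties (disjoint equidimensional components, each with rational singularities): you should observe that $(\rho_s)_*\cO_{\wt Z_s}=\cO_{Z_s}$ together with the normality of $\wt Z_s$ forces $Z_s$ to be \emph{normal}. Indeed, $\rho_s$ factors through the normalization $\nu:Z_s^\nu\to Z_s$ componentwise, and the chain $\cO_{Z_s}\subset\nu_*\cO_{Z_s^\nu}\subset(\rho_s)_*\cO_{\wt Z_s}=\cO_{Z_s}$ collapses. Normality then gives disjointness of the components of $Z_s$ for free, which your ``componentwise'' conclusion implicitly uses. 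Equidimensionality of $Z_s$ for general $s$ (needed to ensure no component of $Z_s$ hides inside $E$) is a standard fact about general fibres of a dominant morphism of irreducible varieties, but it would not hurt to say so. With these remarks your proof is complete.
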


A morphism $f : Y \to Z$ of varieties is a {\em locally trivial
  fibration\/} if each point $z \in Z$ has an open neighborhood $U
\subset Z$ such that $f^{-1}(U) \cong U \times f^{-1}(z)$ and $f$ is
the projection to the first factor.  The following result is obtained
by combining Propositions 2.2 and 2.3 in
\cite{buch.chaput.ea:finiteness}.

\begin{prop}\label{prop:loctriv}
  Let $B \subset G$ be a Borel subgroup, let $Y$ be a $B$-variety, let
  $\Omega \subset X$ be a $B$-stable Schubert variety, and let $f : Y
  \to \Omega$ be a dominant $B$-equivariant map.  Then $f$ is a
  locally trivial fibration over the dense open $B$-orbit
  $\Omega^\circ \subset \Omega$.
\end{prop}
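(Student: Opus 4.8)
The plan is to show that $f$ is in fact globally trivial over $\Omega^\circ$, which certainly implies the required local triviality.

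First I would reduce to understanding $f$ over a single $B$-orbit. The open cell $\Omega^\circ$ is one $B$-orbit, say $\Omega^\circ = B\cdot e$ for a base point $e$, with stabilizer $B_e = \{b \in B : b\cdot e = e\}$; over $\C$ the orbit map identifies $\Omega^\circ$ with $B/B_e$. Since $f$ is $B$-equivariant and dominant, its image is a $B$-stable dense subset of $\Omega$ and hence contains $\Omega^\circ$, so $f$ restricts to a surjective $B$-equivariant morphism $f^{-1}(\Omega^\circ) \to \Omega^\circ$ satisfying $f^{-1}(b\cdot e) = b\cdot f^{-1}(e)$ for all $b \in B$. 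Writing $F = f^{-1}(e)$, every fiber of $f$ over $\Omega^\circ$ is a $B$-translate of $F$, and $f^{-1}(\Omega^\circ)$ is the associated bundle $B\times^{B_e}F$.

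The one substantial point is to produce an algebraic section of the quotient map $B \to B/B_e = \Omega^\circ$. Here I would invoke the classical structure of Schubert cells: writing $\Omega = \overline{BwP/P}$ and taking $e = wP/P$, so that $B_e = B\cap wPw^{-1}$, there is a unipotent subgroup $U_w \subset B$ --- a product of root subgroups --- acting simply transitively on $\Omega^\circ$, so that $u \mapsto u\cdot e$ is an isomorphism $U_w \xrightarrow{\ \sim\ } \Omega^\circ$ and multiplication gives an isomorphism of varieties $U_w \times B_e \xrightarrow{\ \sim\ } B$. One then takes $\sigma(p)$ to be the unique $u \in U_w$ with $u\cdot e = p$; this is an algebraic section defined on all of $\Omega^\circ$. (Alternatively one can argue more softly: $B_e = B\cap wPw^{-1}$ is a connected solvable group, hence a special group in the sense of Serre, so the $B_e$-torsor $B \to B/B_e$ is Zariski-locally trivial, and a section over an open $U \subset \Omega^\circ$ already suffices.)

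Given $\sigma$, I would finish by writing down the trivialization explicitly: set $\Phi : \Omega^\circ \times F \to f^{-1}(\Omega^\circ)$, $\Phi(p,x) = \sigma(p)\cdot x$. This is a morphism commuting with the maps to $\Omega^\circ$, because $f(\sigma(p)\cdot x) = \sigma(p)\cdot f(x) = \sigma(p)\cdot e = p$, and $y \mapsto \big(f(y),\, \sigma(f(y))^{-1}\cdot y\big)$ is a two-sided inverse --- here $f\big(\sigma(f(y))^{-1}\cdot y\big) = \sigma(f(y))^{-1}\cdot f(y) = e$, so the second coordinate does lie in $F$. Hence $f^{-1}(\Omega^\circ) \cong \Omega^\circ \times F$ over $\Omega^\circ$, and since all fibers of $f$ over $\Omega^\circ$ are isomorphic to $F \cong f^{-1}(z)$ this is the asserted statement; if one only obtains a section over an open $U \subset \Omega^\circ$, the same computation trivializes $f$ over $U$. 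The main obstacle is therefore confined to the previous paragraph --- the identification $f^{-1}(\Omega^\circ) \cong B\times^{B_e}F$ and the Zariski-local triviality of the torsor $B \to B/B_e$ --- which is exactly the input packaged by the combination of Propositions 2.2 and 2.3 of \cite{buch.chaput.ea:finiteness}; everything else is formal.
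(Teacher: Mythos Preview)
Your argument is correct. The paper does not actually prove this proposition; it simply records it as the combination of Propositions~2.2 and~2.3 of \cite{buch.chaput.ea:finiteness}, and you have faithfully unpacked what that combination says: identify $f^{-1}(\Omega^\circ)$ with the associated bundle $B\times^{B_e}F$, and then trivialize using the section coming from the unipotent subgroup $U_w$ that acts simply transitively on the Schubert cell. Your alternative remark that $B_e$ is special (so the torsor is Zariski-locally trivial) is also valid and is closer in spirit to how Proposition~2.3 of \cite{buch.chaput.ea:finiteness} is phrased, though the explicit $U_w$-section already gives global triviality over $\Omega^\circ$, which is slightly stronger than what is stated.
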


It was proved in \cite[Prop.~3.7]{buch.chaput.ea:finiteness} that
$M_{\bd,\be}$ is unirational and has rational singularities.
Lemma~\ref{lem:ratsingfib} therefore implies that
$M_{\bd,\be}(x_1,\dots,x_m)$ has rational singularities for all points
$(x_1,\dots,x_m)$ in a dense open subset of $(\ev_1\times \dots \times
\ev_m)(M_{\bd,\be}) \subset X^m$.  Proposition~\ref{prop:loctriv}
applied to the map $\ev_1 : M_{\bd,\be} \to X$ shows that
$M_{\bd,\be}(x)$ is unirational for all points $x \in X$.  Finally,
\cite[Lemma~3.9(a)]{buch.chaput.ea:finiteness} states that the variety
$\cZ_{d,2} = \ev(M_{d,2}) \subset X^2$ is rational and has rational
singularities for any effective degree $d \in H_2(X;\Z)$,

\ignore{
\begin{prop}\label{prop:rat2gw}
  Let $\bd=(d_0,d_1,\dots,d_r)$ be a stable sequence of degrees.  Then
  $M_\bd(x,y)$ is unirational for all points $(x,y)$ in a dense open
  subset of $\cZ_{d_0,2}$.
\end{prop}
\begin{proof}
  Let $\Omega = \Gamma_{d_0}(1.P) \subset X$ be the set of points that
  can be connected to $1.P$ by a rational curve of degree $d_0$.
  Since $M_{d_0}(1.P)$ is irreducible and $P$-stable, it follows that
  $\Omega$ is a $P$-stable Schubert variety.  Let $U \subset \Omega$
  be the dense open $P$-orbit.  It follows from
  Proposition~\ref{prop:loctriv} that $\ev_2 : M_\bd(1.P) \to \Omega$
  is a locally trivial fibration over $U$.  Since $M_\bd(1.P)$ is
  unirational, this implies that $M_\bd(1.P,x)$ is unirational for all
  $x \in U$.  Finally notice that $\cZ_{d_0,2} = G \times^P \Omega =
  (G \times \Omega)/P$, where $P$ acts by $(g,x).p = (gp,p^{-1}.x)$,
  and $M_\bd(x,y)$ is unirational for all points $(x,y)$ in the dense
  open subset $G \times^P U \subset \cZ_{d_0,2}$.
\end{proof}
}

\begin{prop}\label{prop:rat2gw}
  The variety $M_{\bd,\be}(x,y)$ is unirational for all points $(x,y)$
  in a dense open subset of the image $(\ev_1\times\ev_2)(M_{\bd,\be})
  \subset X^2$.
\end{prop}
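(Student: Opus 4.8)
The plan is to use the transitive $G$-action on $X$ to reduce to the case where the first of the two marked points lies at the base point $1.P$, and then to realize the general fibre of the second evaluation map as a direct factor of an open subset of a variety that is already known to be unirational. Fix a Borel subgroup $B \subset P$.

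First I would recall, from the discussion immediately preceding the proposition, that $M_{\bd,\be}(x)$ is unirational --- in particular irreducible --- for every point $x \in X$, and apply this to $x = 1.P$. Since $M_{\bd,\be}$ is closed in the projective variety $\Mb_{0,|\be|}(X,|\bd|)$, the subvariety $M_{\bd,\be}(1.P) = \ev_1^{-1}(1.P)$ is complete, so its image $\Omega := \ev_2(M_{\bd,\be}(1.P)) \subset X$ is closed. As $M_{\bd,\be}(1.P)$ is irreducible and stable under the stabilizer $P$ of $1.P$, and $\ev_2$ is $G$-equivariant, $\Omega$ is a closed irreducible $P$-stable --- hence $B$-stable --- subvariety of $X = G/P$, that is, a $B$-stable Schubert variety. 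Now Proposition~\ref{prop:loctriv}, applied to the dominant $B$-equivariant map $\ev_2 : M_{\bd,\be}(1.P) \to \Omega$, shows that $\ev_2$ is a locally trivial fibration over the dense open $B$-orbit $\Omega^\circ \subset \Omega$. Hence for each $y \in \Omega^\circ$ the fibre $M_{\bd,\be}(1.P,y) = \ev_2^{-1}(y)$ is isomorphic to a direct factor of a dense open subset of $M_{\bd,\be}(1.P)$, and composing a dominant rational map $\P^N \dashrightarrow M_{\bd,\be}(1.P)$ with the projection onto that factor exhibits $M_{\bd,\be}(1.P,y)$ as unirational.

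It remains to spread this out by translation. Put $W = (\ev_1 \times \ev_2)(M_{\bd,\be}) \subset X^2$; it is irreducible, being the image of the irreducible variety $M_{\bd,\be}$, and it is stable under the diagonal $G$-action. Since $g \cdot M_{\bd,\be}(x,y) = M_{\bd,\be}(g.x, g.y)$ for all $g \in G$, the set of pairs $(x,y)$ for which $M_{\bd,\be}(x,y)$ is unirational is $G$-stable; by the previous paragraph it contains $\{1.P\} \times \Omega^\circ$, hence also $G \cdot (\{1.P\} \times \Omega^\circ)$. Any stable map in $M_{\bd,\be}$ can be moved by $G$ so that its first marked point lies at $1.P$, which shows that $G \cdot (\{1.P\} \times \Omega) = W$; consequently $G \cdot (\{1.P\} \times \Omega^\circ)$ is the image of the morphism $G \times \Omega^\circ \to W$, $(g,y) \mapsto (g.(1.P), g.y)$, and this image is dense in $W$ because $G \times \Omega^\circ$ is dense in $G \times \Omega$ and $G \times \Omega \to W$ is surjective. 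By Chevalley's theorem the image is moreover constructible, so it contains a dense open subset of $W$; this proves the claim.

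I do not expect an essential difficulty here. The two steps requiring care are the verification that $\Omega$ really is a Schubert variety, which is what makes Proposition~\ref{prop:loctriv} applicable, and the final passage from a dense subset of $W$ to a dense \emph{open} subset, where the constructibility of the image of a morphism is used.
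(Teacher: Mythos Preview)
Your argument is correct and follows essentially the same route as the paper's proof: reduce to $x = 1.P$, use Proposition~\ref{prop:loctriv} to trivialize $\ev_2$ over the open orbit in $\Omega$, and then translate by $G$. The only notable difference is in the final step, where the paper identifies $(\ev_1\times\ev_2)(M_{\bd,\be})$ with the twisted product $G \times^P \Omega$, so that $G \times^P \Omega^\circ$ is manifestly open, whereas you obtain the dense open subset via Chevalley's constructibility theorem; both are fine.
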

\begin{proof}
  Set $\Omega = \ev_2(M_{\bd,\be}(1.P)) \subset X$.  Since
  $M_{\bd,\be}(1.P)$ is irreducible and $P$-stable, it follows that
  $\Omega$ is a $P$-stable Schubert variety.  Let $U \subset \Omega$
  be the dense open $P$-orbit.  It follows from
  Proposition~\ref{prop:loctriv} that $\ev_2 : M_{\bd,\be}(1.P) \to
  \Omega$ is a locally trivial fibration over $U$.  Since
  $M_{\bd,\be}(1.P)$ is unirational, this implies that
  $M_{\bd,\be}(1.P,x)$ is unirational for all $x \in U$.  Finally
  notice that $(\ev_1\times\ev_2)(M_{\bd,\be}) = G \times^P \Omega =
  (G \times \Omega)/P$, where $P$ acts by $(g,x).p = (gp,p^{-1}.x)$,
  and $M_\bd(x,y)$ is unirational for all points $(x,y)$ in the dense
  open subset $G \times^P U \subset G \times^P \Omega$.
\end{proof}

\begin{remark}
  It is proved in \cite[Lemma~15.8]{jong.he.ea:families} that, if $\bd
  = (1^d) = (1,1,\dots,1)$ with $d$ large, $\be = (1,0^{d-2},1)$, and
  $\Pic(X)=\Z$, then the general fibers of $\ev : M_{\bd,\be} \to X^2$
  are rationally connected.  This also follows from
  Proposition~\ref{prop:rat2gw}.  A more general statement is proved
  in \cite[Prop.~3.2]{buch.chaput.ea:finiteness}.
\end{remark}

\section{Rationally connected Gromov-Witten varieties}

An algebraic variety $X$ is {\em rationally connected\/} if two
general points $x,y \in X$ can be joined by a rational curve, i.e.\
both $x$ and $y$ belong to the image of some morphism $\bP^1 \to X$.
We need the following fundamental result from
\cite{graber.harris.ea:families}.

\begin{thm}[Graber, Harris, Starr]\label{thm:ratconn}
  Let $f : X \to Y$ be any dominant morphism of complete irreducible
  complex varieties.  If $Y$ and the general fibers of $f$ are
  rationally connected, then $X$ is rationally connected.
\end{thm}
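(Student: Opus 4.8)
The plan is to reduce the theorem to the core result of Graber--Harris--Starr on the existence of sections of a family over a curve, and then to sketch how I would prove that result. Since rational connectedness is a birational invariant among smooth projective varieties and is inherited by resolutions of singularities and by dominant images, I may assume at the outset --- working in characteristic zero, via Hironaka and generic smoothness --- that $X$ and $Y$ are smooth projective. Let $x,x'\in X$ be general points. As $f$ is dominant they lie over general points $y=f(x)$ and $y'=f(x')$ of $Y$, and for a general point of $Y$ the fiber of $f$ is smooth, projective, and rationally connected; here I use generic smoothness together with the hypothesis and the fact that rational connectedness is an open condition in families in characteristic zero. Since $Y$ is rationally connected there is a morphism $g\colon\P^1\to Y$ with $g(0)=y$ and $g(\infty)=y'$. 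Let $Z$ be the component of $X\times_Y\P^1$ dominating $\P^1$ and let $h\colon Z\to\P^1$ be the projection; the fibers of $h$ over $0$ and $\infty$ are then the rationally connected varieties $F=f^{-1}(y)$ and $F'=f^{-1}(y')$ (being irreducible of the expected dimension, they lie in the dominant component), and the geometric generic fiber of $h$ is a base change of a general fiber of $f$, hence also rationally connected.

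Granting that $h$ admits a section $\sigma\colon\P^1\to Z$, write $p=\sigma(0)\in F$ and $p'=\sigma(\infty)\in F'$, regarded via the projection $Z\to X$ as points of $X$. Since $F$ is a smooth projective rationally connected variety, there is a rational curve in $F$ through $x$ and $p$ --- recall that such a variety contains a rational curve through any prescribed finite set of points (Koll\'ar--Miyaoka--Mori) --- and similarly a rational curve in $F'$ through $p'$ and $x'$. Concatenating these with the image of $\sigma(\P^1)$ in $X$ gives a connected chain of rational curves in $X$ joining $x$ and $x'$, so $X$ is rationally chain connected; being smooth projective, $X$ is then rationally connected, the chain being smoothable to a single rational curve. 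The theorem is thus reduced to the \emph{section statement}: a morphism $h\colon Z\to B$ from a smooth projective variety onto a smooth projective curve whose geometric generic fiber is rationally connected admits a section. (For the reduction $Z$ need not be smooth: applying the statement to a resolution $\wt Z\to Z$ and composing with $\wt Z\to Z$ produces a section of $h$, since the geometric generic fiber of $\wt Z\to B$ is a resolution of that of $h$ and hence still rationally connected.)

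I expect the section statement to be the main obstacle, and I would prove it by the comb-smoothing technique. After shrinking $B$ and resolving, take $h$ flat with smooth generic fiber $F$; being rationally connected, $F$ --- and likewise every smooth fiber of $h$ --- carries \emph{very free} rational curves, i.e.\ morphisms $\P^1\to F$ along which $T_F$ pulls back to an ample bundle (Koll\'ar--Miyaoka--Mori). Starting from an irreducible multisection $B_0\subset Z$ of $h$, such as a general complete-intersection curve in $Z$, I attach finitely many ``teeth'' --- very free rational curves lying in fibers of $h$, glued to $B_0$ at general points --- forming a nodal curve $C$ with a map to $Z$ whose arithmetic genus equals that of $B_0$. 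The positivity contributed by enough teeth makes $H^1$ of the relevant normal (deformation) sheaf of $C$ in $Z$ relative to $B$ vanish, so $C$ deforms freely; a general deformation smooths all the nodes, and with the handle and teeth chosen as in Graber--Harris--Starr the resulting smooth irreducible curve is a section of $h$ (or, after iterating, one whose degree over $B$ has been reduced to one). The same device, with additional teeth routed through prescribed points of prescribed smooth fibers, yields a section through those points; with that refinement the first paragraph can instead be run using the fiber of $h$ over the generic point directly. The delicate part throughout is the combinatorial and cohomological bookkeeping that guarantees simultaneously the $H^1$-vanishing and that a general smoothing is genuinely a section.

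Finally, the reductions in the first paragraph are routine in characteristic zero: rational connectedness passes to and from resolutions of singularities and to dominant images, so reducing to $X$, $Y$, $Z$ smooth projective is harmless; rational connectedness of fibers is an open condition on the base, which lets me replace ``general fibers of $f$'' by the fiber of $h$ over a general, and then the generic, point of $\P^1$; and generic smoothness ensures that a general point of $X$ maps to a general point of $Y$ lying in a smooth fiber. Assembling these yields the theorem.
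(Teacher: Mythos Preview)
The paper does not prove this theorem at all: it is quoted verbatim as a fundamental result from \cite{graber.harris.ea:families} and used as a black box in the proof of Theorem~\ref{thm:gw3ratconn}. So there is no ``paper's own proof'' to compare against.

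That said, your sketch is a faithful outline of the argument in the original Graber--Harris--Starr paper. The reduction to the section statement over a curve is correct (modulo the routine birational bookkeeping you flag), and the comb-smoothing description captures the essential mechanism: attach very free vertical teeth to a multisection handle, use the resulting positivity to kill $H^1$ of the normal sheaf, and smooth to a curve of lower degree over the base, iterating down to a section. One small point worth tightening: when you pass from ``general fibers of $f$ are rationally connected'' to ``the geometric generic fiber of $h$ is rationally connected,'' you are implicitly using that rational connectedness is both open and stable under base change in smooth projective families in characteristic zero; since the image curve $g(\P^1)$ passes through general points of $Y$, it meets the open locus where fibers of $f$ are rationally connected, and this transfers to $h$. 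You gesture at this, but it is the one step a careful reader might want spelled out. Otherwise the proposal is sound, and considerably more than the present paper attempts.
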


We assume from now on that $X = G/P$ is defined by a maximal parabolic
subgroup $P \subset G$.  Then we have $H_2(X;\Z) = \Z$, so the degree
of a curve in $X$ can be identified with an integer.  We will further
assume that the three-point Gromov-Witten varieties of $X$ of
sufficiently high degree are rationally connected.  More precisely,
assume that there exists an integer $\drc$ such that $M_d(x,y,z)$ is
rationally connected for all $d \geq \drc$ and all points $(x,y,z)$ in
a dense open subset $U_d \subset X^3$.

For $n \geq 2$ we set $d_X(n) = \min\{d \in \N \mid \cZ_{d,n}=X^n\}$.
This is the smallest integer such that, given $n$ arbitrary points in
$X$, there exists a curve of degree $d_X(n)$ through all $n$ points.
Finally we set $\dcl = \min\{d \in \N \mid \cZ_{(1^d),(1,0^{d-2},1)} =
X^2 \}$, where $(1^d) = (1,1,\dots,1)$ denotes a sequence of $d$ ones.
This is the smallest length of a chain of lines connecting two general
points in $X$.  Notice that $d_X(3) \leq \drc$ and $d_X(2) \leq \dcl$.

\begin{thm}\label{thm:gw3ratconn}
  Let $\bd = (d_0,d_1,\dots,d_r)$ be a stable sequence of degrees such
  that $|\bd| \geq \drc + \dcl - 1$.  Then we have $\cZ_\bd =
  \cZ_{d_0,2} \times X$, and $M_\bd(x,y,z)$ is rationally connected
  for all points $(x,y,z)$ in a dense open subset of $\cZ_{\bd}$.
\end{thm}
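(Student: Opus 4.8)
The plan is to establish both assertions at once. Write a $\bd$-chain as $C_0\cup\dots\cup C_r$ with $\deg C_i=d_i$, and cut it at the node joining $C_0$ to $C_1\cup\dots\cup C_r$; iterating the fiber-product description \cite[Prop.~3.6]{buch.chaput.ea:finiteness}, this realizes $M_\bd(x,y,z)$ as the fiber product $M_{d_0,3}(x,y)\times_X M_{\bd''}(z)$, where $\bd''=(d_1,\dots,d_r)$, where $M_{\bd''}(z)$ denotes the Gromov-Witten variety of $\bd''$-chains whose last end equals $z$, and where the fiber product is formed over the evaluation of $M_{d_0,3}(x,y)$ at its third marked point (which is the node) and of $M_{\bd''}(z)$ at its first end. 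The inclusion $\cZ_\bd\subseteq\cZ_{d_0,2}\times X$ is automatic, so what must be shown is the reverse inclusion — equivalently, $\Gamma_\bd(x,y)=X$ for general $(x,y)\in\cZ_{d_0,2}$ — together with the rational connectedness of $M_\bd(x,y,z)$. When $r=0$ this is clear, since $|\bd|=d_0\ge\drc\ge\Dmax$ forces $\cZ_{d_0,3}=X^3=\cZ_{d_0,2}\times X$ and $M_{d_0}(x,y,z)$ is rationally connected for general $(x,y,z)$ by hypothesis. For $r\ge 1$, note that $\bd''$ is again a stable sequence with $|\bd''|=|\bd|-d_0$; let $\cZ_{\bd''}\subseteq X^2$ be the set of pairs joined by a $\bd''$-chain and $V_z=\{w\in X:(w,z)\in\cZ_{\bd''}\}$. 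By Proposition~\ref{prop:rat2gw}, $M_{d_0,3}(x,y)$ is unirational, hence rationally connected, for general $(x,y)\in\cZ_{d_0,2}$, and $M_{\bd''}(w,z)$ is unirational for general $(w,z)\in\cZ_{\bd''}$; moreover $M_{\bd''}(z)$ is unirational for every $z$, by the argument (via Proposition~\ref{prop:loctriv}) that gave unirationality of $M_{\bd,\be}(x)$. Finally, since $\bd=(d_0)$ followed by $\bd''$, either $d_0\ge\drc$, or $d_0\le\drc-1$ and then $|\bd''|=|\bd|-d_0\ge\dcl$; I treat these cases separately.

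Suppose first that $d_0\ge\drc\ge\Dmax$, so $\cZ_{d_0,3}=X^3$ and the third marked point of $M_{d_0,3}(x,y)$ attains every value in $X$. Then $\Gamma_\bd(x,y)$ equals the image of the last evaluation on $M_{\bd''}$, a nonempty $G$-stable subset of $X$ and hence all of $X$; this proves $\cZ_\bd=\cZ_{d_0,2}\times X$. Now fix a general $(x,y,z)$ and let $\pi\colon M_\bd(x,y,z)\to M_{d_0,3}(x,y)$ be the projection. Its image $N$ consists of the maps whose node lies in $V_z$, and the general fiber of $\pi$ over $N$ is $M_{\bd''}(w,z)$ for a general $(w,z)\in\cZ_{\bd''}$, hence rationally connected. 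To see that $N$ is rationally connected, I would apply Theorem~\ref{thm:ratconn} to the surjection $N\to V_z$ given by evaluation at the node: its general fiber is the three-point Gromov-Witten variety $M_{d_0}(x,y,w)$ for general $w\in V_z$, rationally connected by hypothesis, and its base $V_z$ is the image of the unirational variety $M_{\bd''}(z)$ and hence rationally connected. A second application of Theorem~\ref{thm:ratconn}, now to $\pi\colon M_\bd(x,y,z)\to N$, then shows that $M_\bd(x,y,z)$ is rationally connected.

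Suppose next that $d_0\le\drc-1$, so $|\bd''|\ge\dcl$. I would first show that $\cZ_{\bd''}=X^2$. By definition of $\dcl$, two general points of $X$ are the ends of a chain of $\dcl$ lines; a short induction — prepending, through the first point, a line (which exists since $X$ is homogeneous) — extends this to chains of lines of any length $\ge\dcl$; and since a chain of lines lies in the closure of the locus of irreducible rational curves of the same degree (again by homogeneity of $X$), degenerating each $C_i$ to a chain of $d_i$ lines exhibits two general points, being the ends of a chain of $|\bd''|\ge\dcl$ lines, as the ends of a $\bd''$-chain. Thus $\cZ_{\bd''}=X^2$, so the last evaluation on $M_{\bd''}$ is onto $X$, giving $\Gamma_\bd(x,y)=X$ and $\cZ_\bd=\cZ_{d_0,2}\times X$. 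Moreover, for general $(x,y,z)$ the projection $\pi\colon M_\bd(x,y,z)\to M_{d_0,3}(x,y)$ is surjective, since every node value is joined to $z$ by a $\bd''$-chain; its base is rationally connected (unirational for general $(x,y)$), and its general fiber $M_{\bd''}(w,z)$ is rationally connected (unirational for general $(w,z)\in X^2=\cZ_{\bd''}$, the pair $(w,z)$ being general here because $w$ is a general node value for general $(x,y)$ and $z$ is general). Theorem~\ref{thm:ratconn} then shows that $M_\bd(x,y,z)$ is rationally connected.

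What still requires care is the propagation of genericity through these constructions — above all, that a general node value is a general point of $V_z$ (resp.\ of $X$), so that Proposition~\ref{prop:rat2gw} and the hypothesis genuinely apply to the general fiber — as well as the completeness and irreducibility of the varieties $M_\bd(x,y,z)$, $N$, $V_z$ to which Theorem~\ref{thm:ratconn} is applied; completeness is clear, while irreducibility should follow from the unirationality and rational-singularities properties of $M_{\bd,\be}$ recalled above. I expect the main obstacle to be the case $d_0\ge\drc$, in which $\pi$ is not surjective: there one must pin down its image $N$ and carry out the nested application of Theorem~\ref{thm:ratconn} that converts the hypothesis on ordinary three-point Gromov-Witten varieties into the rational connectedness of $N$.
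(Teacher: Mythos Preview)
Your plan is sound, and in the case $|\bd''|\ge\dcl$ it coincides with the paper's argument (the paper writes $\bd'$ for your $\bd''$): the paper likewise projects $p:M_\bd(x,y,z)\to M_{d_0}(x,y)$, shows the general fiber $M_{\bd',\be'}(w,z)$ is unirational, and invokes Theorem~\ref{thm:ratconn}. The two points you flag as needing care are handled explicitly. For irreducibility: general fibers of $p$ connected $\Rightarrow$ all fibers connected (Stein factorization) $\Rightarrow$ $M_\bd(x,y,z)$ connected $\Rightarrow$ irreducible, since rational singularities forces the components to be disjoint. For genericity of the node value: rather than arguing that a general $w\in\Gamma_{d_0}(x,y)$ is general in $X$, the paper fixes $z=1.P$, applies Proposition~\ref{prop:loctriv} to $\ev_1:M_{\bd',\be'}(X,1.P)\to X$ to get unirational fibers over the entire open $P$-orbit $X^\circ$, and imposes on $(x,y)$ only the open condition that $\Gamma_{d_0}(x,y)$ meet $X^\circ$.

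Where you genuinely diverge is the case $d_0\ge\drc$. You keep projecting to the head component, which is not surjective and forces the nested Graber--Harris--Starr argument through $N$. The paper instead projects to the \emph{tail}: $q:M_\bd(x,y,z)\to M_{\bd',\be'}(X,z)$. This map is surjective, its target is the unirational variety $M_{\bd',\be'}(z)$, and the fiber over $f$ is $M_{d_0}(x,y,\ev_1(f))$, rationally connected for general $f$ by hypothesis since $\ev_1(f)$ then ranges over a dense open subset of $X$. A single application of Theorem~\ref{thm:ratconn} finishes, with irreducibility handled by the same Stein-factorization-plus-rational-singularities step. Your route can be completed --- once $M_\bd(x,y,z)$ is known to be irreducible, $N$ is its image and hence irreducible, and for general $(x,y,z)$ a general $w\in V_z$ does lie in the dense open $U_{x,y}\subset X$ --- but reversing the direction of the projection eliminates exactly the obstacle you anticipated.
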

\begin{proof}
  Set $\bd' = (d_1,\dots,d_r)$ and $\be' = (1,0,\dots,0,1) \in \N^r$.
  We then have $d_0 \geq \drc$ or $|\bd'| \geq \dcl$.  Assume first
  that $|\bd'| \geq \dcl$.  It then follows from the definition of
  $\dcl$ that $\cZ_\bd = \cZ_{d_0,2} \times X$.  Let $X^\circ = P
  w_0.P \subset X$ be the open $P$-orbit.  By
  Proposition~\ref{prop:rat2gw} and Lemma~\ref{lem:ratsingfib} we may
  choose a dense open subset $U \subset \cZ_{d_0,2}$ such that, for
  all points $(x,y) \in U$ we have that $M_{d_0}(x,y)$ is unirational,
  $\Gamma_{d_0}(x,y) \cap X^\circ \neq \emptyset$, and
  $M_\bd(x,y,1.P)$ has rational singularities.  Let $(x,y) \in U$.  We
  will show that $M_\bd(x,y,1.P)$ is rationally connected.  Let $p :
  M_\bd(x,y,1.P) \to M_{d_0}(x,y)$ be the projection.  Then the fibers
  of $p$ are given by $p^{-1}(f) = M_{\bd',\be'}(\ev_3(f),1.P)$.
  Since the morphism $\ev_1 : M_{\bd',\be'}(X,1.P) \to X$ is
  surjective and $P$-equivariant, Proposition~\ref{prop:loctriv}
  implies that this map is locally trivial over $X^\circ$.  Since
  $M_{\bd',\be'}(X,1.P)$ is unirational, we deduce that
  $M_{\bd',\be'}(z',1.P)$ is unirational for all $z' \in X^\circ$.
  This implies that $p^{-1}(f)$ is unirational for all $f \in
  M_{d_0}(x,y,X^\circ)$, which is a dense open subset of
  $M_{d_0}(x,y)$ by choice of $U$.  Since the general fibers of $p$
  are connected, it follows from Stein factorization that all fibers
  of $p$ are connected.  Therefore $M_\bd(x,y,1.P)$ is connected.
  Since this variety also has rational singularities, we deduce that
  $M_\bd(x,y,1.P)$ is irreducible.  Finally, Theorem~\ref{thm:ratconn}
  applied to the map $p : M_\bd(x,y,1.P) \to M_{d_0}(x,y)$ shows that
  $M_\bd(x,y,1.P)$ is rationally connected.
  
  Assume now that $d_0 \geq \drc$.  In this case we have $\cZ_{\bd} =
  X^3$.  Let $U \subset X^3$ be a dense open subset such that
  $M_\bd(x,y,z)$ has rational singularities and $M_{d_0}(x,y,z)$ is
  rationally connected and has rational singularities for all $(x,y,z)
  \in U$.  We will show that $M_\bd(x,y,z)$ is rationally connected
  for all $(x,y,z) \in U$.  Let $q : M_\bd(x,y,z) \to
  M_{\bd',\be'}(X,z)$ be the projection and set $U_{x,y} = \{z' \in X
  \mid (x,y,z') \in U\}$.  Then $q^{-1}(f) = M_{d_0}(x,y,\ev_1(f))$ is
  rationally connected for all $f \in M_{\bd',\be'}(U_{x,y},z)$, which
  is a dense open subset of $M_{\bd',\be'}(X,z)$.  Since the general
  fibers of $q$ are connected, it follows by Stein factorization that
  all fibers of $q$ are connected, hence $M_\bd(x,y,z)$ is connected.
  Since $M_\bd(x,y,z)$ has rational singularities, this implies that
  $M_\bd(x,y,z)$ is irreducible.  Theorem~\ref{thm:ratconn} applied to
  the map $q : M_\bd(x,y,z) \to M_{\bd',\be'}(X,z)$ finally shows that
  $M_\bd(x,y,z)$ is rationally connected, as required.
\end{proof}

\section{Quantum $K$-theory}

Let $K(X)$ denote the Grothendieck ring of algebraic vector bundles on
$X$.  An introduction to this ring can be found in e.g.\
\cite[\S3.3]{brion:lectures}.  For each effective degree $d \in
H_2(X;\Z)$ we define a class $\Phi_d \in K(X^3)$ by
\[
\Phi_d = \sum_{\bd=(d_0,\dots,d_r)} (-1)^r \ev_*[\cO_{M_\bd}] \,,
\]
where the sum is over all stable sequences of degrees $\bd$ such that
$|\bd| = d$, and $\ev : M_\bd \to X^3$ is the evaluation map.  Let
$\pi_i : X^3 \to X$ be the projection to the $i$-th factor.  For $\al,
\beta \in K(X)$ we set $(\al \star \beta)_d =
{\pi_3}_*(\pi_1^*(\al)\cdot \pi_2^*(\beta) \cdot \Phi_d) \in K(X)$.
The quantum $K$-theory ring of $X$ is an algebra over $\Z\llbracket
q\rrbracket$, which as a $\Z\llbracket q\rrbracket$-module is given by
$\QK(X) = K(X) \otimes_\Z \Z\llbracket q \rrbracket$.  The
multiplicative structure of $\QK(X)$ is defined by
\[
\al \star \beta = \sum_d (\al \star \beta)_d\, q^d
\]
for all classes $\al,\beta \in K(X)$, where the sum is over all
effective degrees $d$.  A theorem of Givental \cite{givental:wdvv}
states that $\QK(X)$ is an associative ring.  We note that the
definition of $\QK(X)$ given here is different from Givental's
original construction; the equivalence of the two definitions follows
from \cite[Lemma~5.1]{buch.chaput.ea:finiteness}.\footnote{Lemma~5.1
  in \cite{buch.chaput.ea:finiteness} is stated only for cominuscule
  varieties, but its proof works verbatim for any projective rational
  homogeneous space $X$.}

We need the following Gysin formula from
\cite[Thm.~3.1]{buch.mihalcea:quantum} (see also
\cite[Prop.~5.2]{buch.chaput.ea:finiteness} for the stated version.)

\begin{prop}
  Let $f : X \to Y$ be a surjective morphism of projective varieties
  with rational singularities.  If the general fibers of $f$ are
  rationally connected, then $f_*[\cO_X] = [\cO_Y] \in K(Y)$.
\end{prop}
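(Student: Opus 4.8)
The plan is to prove the stronger statement that $Rf_*\cO_X \cong \cO_Y$ in the bounded derived category of coherent sheaves on $Y$ --- equivalently, that $f_*\cO_X = \cO_Y$ and $R^if_*\cO_X = 0$ for all $i>0$. Since $f_*[\cO_X] = \sum_i (-1)^i [R^if_*\cO_X]$ in $K(Y)$, the proposition follows at once from this.

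First I would reduce to the case where $X$ is smooth. Choose a resolution of singularities $\pi : \wt X \to X$. Since $X$ has rational singularities, $R\pi_*\cO_{\wt X} = \cO_X$, so neither $f_*[\cO_X]$ nor $Rf_*\cO_X$ changes when $(X,f)$ is replaced by $(\wt X, f\circ\pi)$; and by generic smoothness a general fiber of $f\circ\pi$ is a smooth projective variety that resolves a general fiber of $f$, hence is rationally connected, since rational curves joining general points of the latter lift to the resolution. So assume $X$ is smooth; note that $Y$, having rational singularities, is in particular normal. Then $f_*\cO_X = \cO_Y$ is formal: a rationally connected variety is connected, so a general fiber of $f$ is connected, and in the Stein factorization $f = h\circ g$ the finite morphism $h$ has degree one, hence is birational, hence --- $Y$ being normal --- an isomorphism; thus $f_*\cO_X = g_*\cO_X = \cO_Y$.

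The substance is the vanishing $R^if_*\cO_X = 0$ for $i>0$, and here I would also resolve the base. Fix a resolution $\rho : \wt Y \to Y$, so $R\rho_*\cO_{\wt Y} = \cO_Y$ by rational singularities of $Y$. Let $X'$ be the irreducible component of $X\times_Y\wt Y$ mapping birationally onto $X$ (namely, the closure of the preimage of the open locus over which $\rho$ is an isomorphism), let $W \to X'$ be a resolution, and let $\psi : W \to X$ and $g : W \to \wt Y$ be the induced morphisms, so $f\circ\psi = \rho\circ g$. The morphism $\psi$ is proper and birational and $W$, $X$ are smooth, so $R\psi_*\cO_W = \cO_X$; and $g : W \to \wt Y$ is a surjective morphism of smooth projective varieties whose general fiber is birational to a general fiber of $f$, hence rationally connected. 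Granting $R^ig_*\cO_W = 0$ for $i>0$ --- together with $g_*\cO_W = \cO_{\wt Y}$, which holds by the Stein-factorization argument above since $\wt Y$ is smooth --- one computes
\begin{align*}
Rf_*\cO_X &= Rf_*R\psi_*\cO_W = R(f\circ\psi)_*\cO_W = R(\rho\circ g)_*\cO_W \\
&= R\rho_*Rg_*\cO_W = R\rho_*\cO_{\wt Y} = \cO_Y .
\end{align*}

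The hard part, and the only step that needs substantial input, is thus the vanishing $R^ig_*\cO_W = 0$ ($i>0$) for the rationally connected fibration $g$ between the smooth projective varieties $W$ and $\wt Y$. A general fiber $F$ is smooth, projective and rationally connected, so $H^i(F,\cO_F)=0$ for $i>0$, by Hodge symmetry together with the vanishing $H^0(F,\Omega^p_F)=0$ for $p>0$; by the theorem on cohomology and base change this already forces $R^ig_*\cO_W = 0$ over a dense open subset of $\wt Y$. Propagating the vanishing to all of $\wt Y$ is where I would appeal to the theory of higher direct images of dualizing sheaves: Koll{\'a}r's torsion-freeness and decomposition theorems for the higher direct images $R^ig_*\omega_W$, combined with relative Serre duality on the smooth varieties $W$ and $\wt Y$ and with the generic vanishing just obtained, should force $Rg_*\cO_W$ to be concentrated in degree zero. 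I expect this appeal to Koll{\'a}r-type vanishing for rationally connected fibrations to be the crux of the proof; the remaining steps are formal consequences of resolution of singularities and the two rational-singularities hypotheses.
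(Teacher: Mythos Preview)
The paper does not prove this proposition: it is quoted from \cite[Thm.~3.1]{buch.mihalcea:quantum} (with a pointer to \cite[Prop.~5.2]{buch.chaput.ea:finiteness} for the present formulation), so there is no in-paper argument to compare against. Your outline is, in fact, essentially the proof given in those references: reduce to smooth source via a resolution (using $R\pi_*\cO_{\wt X}=\cO_X$), get $f_*\cO_X=\cO_Y$ from Stein factorization and normality of $Y$, and obtain the higher vanishing from Koll{\'a}r's theorems on $R^ig_*\omega_W$ combined with Grothendieck duality.

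One comment on the last step, which you leave a bit impressionistic. The clean way to finish is: for the smooth projective map $g:W\to\wt Y$ of relative dimension $d$, Koll{\'a}r's torsion-freeness gives that each $R^ig_*\omega_W$ is torsion-free, while Serre duality on a general fiber $F$ turns $H^i(F,\cO_F)=0$ for $i>0$ into $H^j(F,\omega_F)=0$ for $j<d$; base change then forces $R^jg_*\omega_W=0$ for $j\neq d$, and $R^dg_*\omega_W\cong\omega_{\wt Y}$ (Koll{\'a}r). Thus $Rg_*\omega_W\cong\omega_{\wt Y}[-d]$, and Grothendieck duality yields $Rg_*\cO_W\cong R\mathcal{H}om(\omega_{\wt Y},\omega_{\wt Y})=\cO_{\wt Y}$. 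With this made explicit your argument is complete and matches the literature.
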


\begin{cor}\label{cor:pushMd}
  Let $\bd = (d_0,\dots,d_r)$ be a stable sequence of degrees such
  that $|\bd| \geq \drc + \dcl - 1$.  Then we have $\ev_*[\cO_{M_\bd}]
  = [\cO_{\cZ_\bd}] \in K(X^3)$.
\end{cor}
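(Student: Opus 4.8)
The plan is to apply the preceding Gysin proposition to the evaluation morphism $\ev : M_\bd \to \cZ_\bd$ and then push forward along the closed embedding $\cZ_\bd \hookrightarrow X^3$. Concretely, I would verify the four hypotheses of that proposition: that $\ev : M_\bd \to \cZ_\bd$ is surjective, that $M_\bd$ and $\cZ_\bd$ are projective, that both have rational singularities, and that the general fibers of $\ev$ are rationally connected.

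Surjectivity is immediate since $\cZ_\bd = \ev(M_\bd)$ by definition, and both spaces are projective: $M_\bd$ is a closed subvariety of the projective Kontsevich space $\Mb_{0,3}(X,|\bd|)$, and $\cZ_\bd$ is its image. That $M_\bd$ is irreducible (being unirational) and has rational singularities is recalled in Section~2 from \cite[Prop.~3.7]{buch.chaput.ea:finiteness}. For $\cZ_\bd$ I would invoke Theorem~\ref{thm:gw3ratconn}, whose hypothesis $|\bd| \geq \drc + \dcl - 1$ is exactly the one assumed here, to get $\cZ_\bd = \cZ_{d_0,2} \times X$; the factor $\cZ_{d_0,2}$ has rational singularities by \cite[Lemma~3.9(a)]{buch.chaput.ea:finiteness}, and since $X = G/P$ is smooth the product has rational singularities as well. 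Here the one point needing a genuine (if routine) argument is this last stability statement: given a desingularization $\pi : \wt Z \to \cZ_{d_0,2}$, the morphism $\pi \times \id_X$ desingularizes $\cZ_\bd$, and flat base change along the smooth projection $\cZ_\bd \to \cZ_{d_0,2}$ gives $(\pi\times\id_X)_*\cO_{\wt Z \times X} = \cO_{\cZ_\bd}$ and $R^i(\pi\times\id_X)_*\cO_{\wt Z \times X} = 0$ for $i > 0$.

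Finally, since the marking vector $\be$ has $|\be| = 3$, the fiber of $\ev$ over a point $(x,y,z)$ of $\cZ_\bd$ is by definition the Gromov-Witten variety $M_\bd(x,y,z) = \ev^{-1}(x,y,z)$, and Theorem~\ref{thm:gw3ratconn} states precisely that this fiber is rationally connected for all $(x,y,z)$ in a dense open subset of $\cZ_\bd$ --- that is, the general fibers of $\ev$ are rationally connected. The Gysin proposition then gives $\ev_*[\cO_{M_\bd}] = [\cO_{\cZ_\bd}]$ in $K(\cZ_\bd)$, and pushing forward along the closed embedding $\cZ_\bd \hookrightarrow X^3$ yields the desired identity in $K(X^3)$. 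I do not expect any real obstacle: the corollary is an assembly of Theorem~\ref{thm:gw3ratconn}, the cited rational-singularity results, and the Gysin formula, with the product-with-$X$ step being the only place where a short verification is needed.
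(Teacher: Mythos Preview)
Your proposal is correct and follows essentially the same approach as the paper: apply the Gysin proposition to $\ev : M_\bd \to \cZ_\bd$, using Theorem~\ref{thm:gw3ratconn} for the identification $\cZ_\bd = \cZ_{d_0,2}\times X$ and the rational connectedness of general fibers, \cite[Prop.~3.7]{buch.chaput.ea:finiteness} for rational singularities of $M_\bd$, and \cite[Lemma~3.9]{buch.chaput.ea:finiteness} for rational singularities of $\cZ_{d_0,2}$. You simply spell out the product-with-$X$ step and the closed-embedding push-forward in more detail than the paper does.
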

\begin{proof}
  This holds because $\cZ_\bd = \cZ_{d_0,2} \times X$ has rational
  singularities \cite[Lemma~3.9]{buch.chaput.ea:finiteness}, the
  general fibers of the map $\ev : M_\bd \to \cZ_\bd$ are rationally
  connected by Theorem~\ref{thm:gw3ratconn}, and $M_\bd$ has rational
  singularities by \cite[Prop.~3.7]{buch.chaput.ea:finiteness}.
\end{proof}

Theorem~\ref{thm:main} is equivalent to the following result.

\begin{thm}\label{thm:Phid}
  We have $\Phi_d = 0$ for all $d \geq \drc + \dcl$.
\end{thm}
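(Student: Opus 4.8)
The plan is to evaluate $\Phi_d$ term by term, reducing everything to an elementary combinatorial cancellation. Suppose $d \geq \drc+\dcl$. Then every stable sequence of degrees $\bd=(d_0,\dots,d_r)$ appearing in the definition of $\Phi_d$ satisfies $|\bd|=d\geq\drc+\dcl-1$, so Corollary~\ref{cor:pushMd} applies and gives $\ev_*[\cO_{M_\bd}]=[\cO_{\cZ_\bd}]=[\cO_{\cZ_{d_0,2}\times X}]$ in $K(X^3)$, the last equality being the first assertion of Theorem~\ref{thm:gw3ratconn}. Substituting and grouping the (finite) sum according to the value of $d_0$ yields
\[
\Phi_d \;=\; \sum_{d_0=0}^{d} c(d-d_0)\,[\cO_{\cZ_{d_0,2}\times X}]\,,\qquad
c(n):=\sum_{\substack{(d_1,\dots,d_r),\ r\geq 0\\ d_i>0,\ d_1+\dots+d_r=n}}(-1)^r\,.
\]

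The next step is to compute $c(n)$. For $n=0$ only the empty sequence contributes ($r=0$), so $c(0)=1$; for $n\geq 1$ the sequences counted are exactly the compositions of $n$, of which there are $\binom{n-1}{r-1}$ with $r$ parts, so $c(n)=\sum_{r\geq 1}(-1)^r\binom{n-1}{r-1}=-(1-1)^{n-1}$, which is $-1$ for $n=1$ and $0$ for $n\geq 2$. (Equivalently, $\sum_{n\geq 0}c(n)x^n=1+\sum_{r\geq 1}(-1)^r\bigl(x/(1-x)\bigr)^r=1-x$.) Hence only the terms $d_0=d$ and $d_0=d-1$ survive, and
\[
\Phi_d \;=\; [\cO_{\cZ_{d,2}\times X}]-[\cO_{\cZ_{d-1,2}\times X}]\,.
\]

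Finally I would argue that these two classes are equal. Since $\drc$ is a positive integer, $d\geq\drc+\dcl$ forces $d-1\geq\dcl\geq d_X(2)$, and by the definition of $d_X(2)$ this gives $\cZ_{d-1,2}=\cZ_{d,2}=X^2$; thus both terms equal $[\cO_{X^3}]$ and $\Phi_d=0$, which is Theorem~\ref{thm:Phid}, hence Theorem~\ref{thm:main}. I do not expect a real obstacle: all the geometric content has already been absorbed into Theorem~\ref{thm:gw3ratconn} and Corollary~\ref{cor:pushMd}, and what remains is bookkeeping. The two points that require a little care are the degree inequalities — that every $\bd$ occurring in $\Phi_d$ meets the hypothesis $|\bd|\geq\drc+\dcl-1$ of Corollary~\ref{cor:pushMd}, and that $d-1$ is still large enough to give $\cZ_{d-1,2}=X^2$ — together with the alternating-sum identity for $c(n)$, which follows from either of the two displayed computations.
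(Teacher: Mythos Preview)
Your proof is correct and follows essentially the same approach as the paper's: both apply Corollary~\ref{cor:pushMd} to replace $\ev_*[\cO_{M_\bd}]$ by $[\cO_{\cZ_{d_0,2}\times X}]$, group by $d_0$, use the alternating binomial identity $\sum_r(-1)^r\binom{d-d_0-1}{r-1}=0$ to kill the terms with $d_0\leq d-2$, and then observe that the remaining two terms cancel because $d-1\geq d_X(2)$ forces $\cZ_{d,2}=\cZ_{d-1,2}=X^2$. The only difference is packaging---you isolate the coefficient $c(n)$ and compute it uniformly, whereas the paper treats $d_0\in\{d,d-1\}$ separately first---but the content is identical.
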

\begin{proof}
  It follows from Corollary~\ref{cor:pushMd} that, for $d \geq
  \drc+\dcl$ we have
  \[
  \Phi_d = \sum_{\bd=(d_0,\dots,d_r)} (-1)^r [\cO_{\cZ_\bd}] \ \in
  K(X^3) \,,
  \]
  where the sum is over all stable sequences of degrees $\bd$ with
  $|\bd|=d$.  Since $\cZ_\bd = \cZ_{d_0,2}\times X$, the terms of this
  sum depend only on $d_0$.  Since $d \geq \dcl+\drc > d_X(2)$, it
  follows that $\cZ_{(d)} = \cZ_{(d-1,1)} = X^3$, so the contributions
  from the sequences $\bd=(d)$ and $\bd=(d-1,1)$ cancel each other
  out.  Now let $0 \leq d' \leq d-2$.  For each $r$ with $1 \leq r
  \leq d-d'$, there are exactly $\binom{d-d'-1}{r-1}$ sequences $\bd$
  in the sum for which $d_0=d'$ and the length of $\bd$ is $r+1$.
  Since $\sum_{r=1}^{d-d'} (-1)^r \binom{d-d'-1}{r-1} = 0$, it follows
  that the corresponding terms cancel each other out.  It follows that
  $\Phi_d=0$, as claimed.
\end{proof}

\begin{remark}
  Theorem~\ref{thm:Phid} is true also for the equivariant $K$-theory
  ring $\QK_T(X)$ with the same proof.
\end{remark}

\begin{remark}
  If $X$ is not the projective line, then the proof of
  Theorem~\ref{thm:Phid} shows that $\Phi_d=0$ for all $d \geq \drc +
  \dcl - 1$.  It would be interesting to determine the maximal value
  of $d$ for which $\Phi_d \neq 0$.  If $X$ is a cominuscule variety,
  then this number is equal to $d_X(2)$, hence the maximal power of
  $q$ that appears in products in the quantum $K$-theory ring of $X$
  is equal to the maximal power that appears in the quantum cohomology
  ring \cite{buch.chaput.ea:finiteness}.
\end{remark}

%\bibliography{../BibTeX/database,rcfinite}

\begin{thebibliography}{10}

\bibitem{brion:positivity}
M.~Brion, \emph{Positivity in the {G}rothendieck group of complex flag
  varieties}, J. Algebra \textbf{258} (2002), no.~1, 137--159, Special issue in
  celebration of Claudio Procesi's 60th birthday. \MR{1958901 (2003m:14017)}

\bibitem{brion:lectures}
\bysame, \emph{Lectures on the geometry of flag varieties}, Topics in
  cohomological studies of algebraic varieties, Trends Math., Birkh\"auser,
  Basel, 2005, pp.~33--85. \MR{2143072 (2006f:14058)}

\bibitem{buch.chaput.ea:finiteness}
A.~S. Buch, P.-E. Chaput, L.~Mihalcea, and N.~Perrin, \emph{Finiteness of
  cominuscule quantum {$K$}-theory}, to appear in Ann. Sci. Ec. Norm. Super.,
  arXiv:1011.6658.

\bibitem{buch.mihalcea:quantum}
A.~S. Buch and L.~C. Mihalcea, \emph{Quantum {$K$}-theory of {G}rassmannians},
  Duke Math. J. \textbf{156} (2011), no.~3, 501--538. \MR{2772069
  (2011m:14092)}

\bibitem{chaput.perrin:rationality}
P.-E. Chaput and N.~Perrin, \emph{Rationality of some {G}romov-{W}itten
  varieties and application to quantum {$K$}-theory}, Commun. Contemp. Math.
  \textbf{13} (2011), no.~1, 67--90. \MR{2772579 (2012k:14077)}

\bibitem{jong.he.ea:families}
A.~J. de~Jong, X.~He, and J.~M. Starr, \emph{Families of rationally simply
  connected varieties over surfaces and torsors for semisimple groups}, Publ.
  Math. Inst. Hautes \'Etudes Sci. (2011), no.~114, 1--85. \MR{2854858}

\bibitem{jong.starr:low}
A.~J. de~Jong and J.~Starr, \emph{Low degree complete intersections are
  rationally simply connected}, preprint
  http://www.math.sunysb.edu/$\sim$jstarr/papers/nk1006g.pdf.

\bibitem{fulton.pandharipande:notes}
W.~Fulton and R.~Pandharipande, \emph{Notes on stable maps and quantum
  cohomology}, Algebraic geometry---{S}anta {C}ruz 1995, Proc. Sympos. Pure
  Math., vol.~62, Amer. Math. Soc., Providence, RI, 1997, pp.~45--96.
  \MR{1492534 (98m:14025)}

\bibitem{givental:wdvv}
A.~Givental, \emph{On the {WDVV} equation in quantum {$K$}-theory}, Michigan
  Math. J. \textbf{48} (2000), 295--304, Dedicated to William Fulton on the
  occasion of his 60th birthday. \MR{1786492 (2001m:14078)}

\bibitem{graber.harris.ea:families}
T.~Graber, J.~Harris, and J.~Starr, \emph{Families of rationally connected
  varieties}, J. Amer. Math. Soc. \textbf{16} (2003), no.~1, 57--67
  (electronic). \MR{1937199 (2003m:14081)}

\bibitem{lee:quantum*1}
Y.-P. Lee, \emph{Quantum {$K$}-theory. {I}. {F}oundations}, Duke Math. J.
  \textbf{121} (2004), no.~3, 389--424. \MR{2040281 (2005f:14107)}

\end{thebibliography}
%\bibliographystyle{amsplain}

\providecommand{\bysame}{\leavevmode\hbox to3em{\hrulefill}\thinspace}
\providecommand{\MR}{\relax\ifhmode\unskip\space\fi MR }
% \MRhref is called by the amsart/book/proc definition of \MR.
\providecommand{\MRhref}[2]{%
  \href{http://www.ams.org/mathscinet-getitem?mr=#1}{#2}
}
\providecommand{\href}[2]{#2}

\end{document}